\newtheorem{theorem}{Th\'eor\`eme}[section]
\newtheorem{proposition}{Proposition}[section]
\newtheorem{corollaire}{Corollaire}[section]
\newtheorem{lemme}{Lemme}[section]
\newtheorem{Remarques}{Remarques}[section]
\newtheorem{Remarque}{Remarque}[section]
\newtheorem{exemple}{Exemple}[section]
\begin{document}

%\begin{frontmatter}

\title{Produit tensoriel compl\'{e}t\'{e} et platitude}

\author{Mohamed TABA\^A}
\address{DEPARTMENT OF MATHEMATICS\\ FACULTY OF SCIENCES\\
MOHAMMED V UNIVERSITY IN RABAT\\ RABAT\\ MOROCCO
}
\email{mohamedtabbaa11@gmail.com; mohamedtabaa@fsr.ac.ma}
\maketitle

\begin{abstract}
Nous donnons, dans un cas plus g\'e{}n\'e{}ral que le cas noeth\'e{}rien,
une réponse \`{a} la question pos\'e{}e par Shaul sur la platitude 
du produit tensoriel compl\'e{}t\'e{}. 

\noindent\textsc{Abstract}.
We give, in a more general case than the noetherian case, an answer to the question posed by Shaul on the flatness of the completed tensor product.   
\end{abstract}

%\begin{keyword}

%\end{keyword}
\date{\today}
%\end{frontmatter}

\section{Introduction}
Tous les anneaux consid\'{e}r\'{e}s sont suppos\'{e}s commutatifs et
unitaires ; tous les modules sur ces anneaux
sont suppos\'{e}s unitaires.
Les notations et la teminologie sont celles de EGA I.

Soient $A$ un anneau\ et $\mathfrak{J}$ un id\'{e}al de $A$. On dira que $\mathfrak{J}$ v\'{e}rifie la propri\'{e}t\'{e} d'Artin faible {\normalfont (\textbf{APf})} si :

Pour tout module $M$ de type fini et pour tout
sous-module de type fini $N$ de $M$, la topologie $\mathfrak{J}$-pr\'{e}%
adique de $N$ est induite\ par la topologie $\mathfrak{J}$-pr\'{e}adique de $M$.

En d'autres termes, pour tout
entier $n>0$ il existe un entier $k>0$ tel que $\mathfrak{J}^{k}M\cap N\subset\mathfrak{J}^{n}N$.

Citons deux exemples:
\begin{enumerate}
\item Noeth\'{e}rien. (cf. [3, chap. 3, \S 3, $\text{n}^{\circ}$2, th.2]).
\item Rigide. $A$ est un anneau de valuation de hauteur 1 $(a)$-adique ($a\neq 0$), $\mathfrak{J} = (a)$. (cf. ([2],\S 1)).
\end{enumerate}

Cette classe d'anneaux, ainsi que d'autres,
sont intensivement \'{e}tudi\'{e}es
par Fujiwara et Kato dans [4].
Notre r\'{e}sultat principal concerne la platitude
du produit tensoriel compl\'{e}t\'{e} $B\widehat{\otimes}_A C$
de deux $A$-alg\`{e}bres topologiques
pr\'{e}adiques $B$ et $C$.
En d\'{e}signant par $\mathfrak{m}$ un id\'{e}al de
d\'{e}finition de $B$, par $\mathfrak{n}$
un id\'{e}al de d\'{e}finition de $C$ et par $\mathfrak{r}$ l'id\'{e}al $\text{Im}(\mathfrak{m}\otimes_A C)+\text{Im}(B\otimes_A \mathfrak{n})$ de $B\otimes_A C$,  nous montrons que, si
$\mathfrak{m}$ et $\mathfrak{n}$ sont de type fini,
$A$  est un anneau absolument plat,
$C$ est un anneau coh\'{e}rent  et si $\mathfrak{n}$
et $\mathfrak{r}(B\widehat{\otimes}_A C)$
v\'{e}rifient la propri\'{e}t\'{e}
{\normalfont (\textbf{APf})}, alors
$B\widehat{\otimes}_A C$ est $C$-plat ;
ce qui donne, en particulier, une r\'{e}ponse
\`{a} la question pos\'{e}e dans [8] (cf. cor. 5.1).

\section{Un isomorphisme}
Dans ce num\'{e}ro, $A$ d\'{e}signe un anneau pr\'{e}adique,
$\mathfrak{J}$ un ideal de d\'{e}finition de $A$,
$M$ un $A$-module muni de la topologie $\mathfrak{J}$-pr\'{e}adique,
$B$ une alg\`{e}bre topologique pr\'{e}adique,
$\mathfrak{R}$ un id\'{e}al de d\'{e}finition de $B$,
$E$ un $B$-module muni de la topologie
$\mathfrak{R}$-pr\'{e}adique et $\widehat{E}$
son s\'{e}par\'{e} compl\'{e}t\'{e} pour cette topologie.
\begin{lemme} \label{lemme:21}
Les topologies produit tensoriel sur $E \otimes_{A} M$
et $E\otimes_{B}(B\otimes_{A}M)$ sont les topologies
$\mathfrak{R}$-adiques.
\end{lemme}
\begin{proof}
Par d\'{e}finition la topologie produit tensoriel sur
$E\otimes_{A}M$ est d\'{e}finie par les sous-modules
	$\text{Im}(\mathfrak{N}^{i}E\otimes_A M)
		+\text{Im}(E\otimes_A \mathfrak{J}^{j}M)$.
Par hypoth\`{e}se il existe
un entier $p$ tel que $\mathfrak{J}^{p}B \subset \mathfrak{N}$,
donc
$\text{Im}(E\otimes_A \mathfrak{J}^{mp}M)
	= \text{Im}(\mathfrak{J}^{mp}E\otimes_A M)\subset
	\mathfrak{N}^{m}(E\otimes_A M)$ ;
	on en conclut que la topologie produit tensoriel
	sur $E\otimes_{A}M$ est la topologie $\mathfrak{N}$-pr\'{e}%
adique, et on en d\'{e}duit que la topologie produit tensoriel
sur $E\otimes_{B}(B\otimes_{A} M)$ est la topologie
$\mathfrak{N}$-pr\'{e}adique.
\end{proof}

\begin{lemme} \label{lemme:22} On suppose que $\mathfrak{J}$ v\'{e}rife la propri\'{e}t\'{e} {\normalfont (\textbf{APf})} et que $M$ est $A$-plat. Si $M_{1}\overset{u_{1}}{\rightarrow }M_{2}\overset{u_{2}}{\rightarrow }%
M_{3}$ est une suite exacte de $A$-modules de type fini, alors la suite $L_{1}\ \widehat{\otimes}_A M \rightarrow L_{0}
\widehat{\otimes}_A M \rightarrow  M \widehat{\otimes}_A M \rightarrow 0$ est exacte.
\end{lemme}

\begin{proof}
Le module $M$ est $A$-plat
donc la suite
$M_{1}\otimes _{A}M\overset{u_{1}\otimes 1_{M}}{\longrightarrow }%
M_{2}\otimes _{A}M\ \overset{u_{2}\otimes 1_{M}}{\longrightarrow }\ M_{3}\otimes
_{A}M$ est exacte et comme $\mathfrak{J}$ v\'{e}rifie la propri\'{e}t\'{e} {\normalfont (\textbf{APf})}, les applications $%
u_{i}\otimes 1_{M}$ $(i=1,2)$ sont sricts [3,
chap. 1,\S 2, $\text{n}^{\circ}6$].
Il r\'{e}sulte du lemme 2 de [3, chap. 3,\S 2,
$\text{n}^{\circ}12$] que la suite
$M_{1} \widehat{\otimes}_A  M \rightarrow M_{2}
	\widehat{\otimes}_A  M \rightarrow M_{3} \widehat{\otimes}_A  M$
est exacte.
\end{proof}

\begin{proposition}
 On suppose que $\mathfrak{N}$ v\'{e}rife la propri\'{e}t\'{e}
 {\normalfont (\textbf{APf})} et que le $B$-module $B\otimes_A M$
 est  de pr\'{e}sentation finie.
 Si $E$ est $B$-plat alors le produit tensoriel compl\'{e}t\'{e}
 $E\widehat{\otimes }_A M$ est isomorphe \`{a}
 $\widehat{E} \otimes_{A} M$.
\end{proposition}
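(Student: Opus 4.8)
Le plan est de se ramener, gr\^ace au lemme \ref{lemme:21} et \`a l'associativit\'e du produit tensoriel, \`a un \'enonc\'e ne portant que sur le $B$-module de pr\'esentation finie $P=B\otimes_A M$ et sur le $B$-module plat $E$, puis d'exploiter une pr\'esentation finie de $P$ en conjonction avec le lemme \ref{lemme:22}. On a $E\otimes_A M\cong E\otimes_B P$ et $\widehat{E}\otimes_A M\cong\widehat{E}\otimes_B P$ canoniquement comme $B$-modules, et d'apr\`es le lemme \ref{lemme:21} les topologies produit tensoriel sur $E\otimes_A M$, sur $E\otimes_B P$ et sur $\widehat{E}\otimes_B P$ sont les topologies $\mathfrak{N}$-pr\'eadiques, de sorte que le premier de ces isomorphismes est topologique ; par suite $E\widehat{\otimes}_A M$ s'identifie au s\'epar\'e compl\'et\'e $\mathfrak{N}$-adique $(E\otimes_B P)^{\wedge}$, et il suffit de construire un $B$-isomorphisme entre $(E\otimes_B P)^{\wedge}$ et $\widehat{E}\otimes_B P$.

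On choisit une pr\'esentation finie $B^{m}\overset{u}{\rightarrow}B^{n}\overset{\pi}{\rightarrow}P\rightarrow 0$. On applique le lemme \ref{lemme:22}, avec $A$, $\mathfrak{J}$, $M$ y remplac\'es respectivement par $B$, $\mathfrak{N}$, $E$ (ce qui est licite : $\mathfrak{N}$ v\'erifie {\normalfont (\textbf{APf})}, $E$ est $B$-plat, et $B^{m}$, $B^{n}$, $P$ sont de type fini), successivement \`a la suite exacte $B^{m}\overset{u}{\rightarrow}B^{n}\overset{\pi}{\rightarrow}P$ puis \`a la suite exacte $B^{n}\overset{\pi}{\rightarrow}P\rightarrow 0$ ; on obtient ainsi une suite exacte
\[ B^{m}\widehat{\otimes}_B E\longrightarrow B^{n}\widehat{\otimes}_B E\longrightarrow P\widehat{\otimes}_B E\longrightarrow 0 . \]
La compl\'etion commutant aux sommes directes finies, le lemme \ref{lemme:21} identifie $B^{k}\widehat{\otimes}_B E$ \`a $(\widehat{E})^{k}$, et $P\widehat{\otimes}_B E$ au compl\'et\'e $(E\otimes_B P)^{\wedge}$ ci-dessus ; on dispose donc d'une suite exacte \`a droite $(\widehat{E})^{m}\rightarrow(\widehat{E})^{n}\rightarrow(E\otimes_B P)^{\wedge}\rightarrow 0$.

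D'autre part, en appliquant $\widehat{E}\otimes_B-$ \`a la pr\'esentation $B^{m}\overset{u}{\rightarrow}B^{n}\overset{\pi}{\rightarrow}P\rightarrow 0$, on obtient la suite exacte \`a droite $(\widehat{E})^{m}\rightarrow(\widehat{E})^{n}\rightarrow\widehat{E}\otimes_B P\rightarrow 0$. Apr\`es les identifications ci-dessus, les applications $(\widehat{E})^{m}\rightarrow(\widehat{E})^{n}$ figurant dans ces deux suites sont l'une et l'autre donn\'ees par l'action sur $\widehat{E}$ de la matrice de $u$ : dans le premier cas parce que le compl\'et\'e d'une application $B$-lin\'eaire continue donn\'ee par une matrice \`a coefficients dans $B$ est cette m\^eme matrice agissant sur les compl\'et\'es, dans le second par d\'efinition de $u\otimes 1_{\widehat{E}}$. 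Les deux applications sont donc identiques, et le passage aux conoyaux donne $(E\otimes_B P)^{\wedge}\cong\widehat{E}\otimes_B P\cong\widehat{E}\otimes_A M$, c'est-\`a-dire $E\widehat{\otimes}_A M\cong\widehat{E}\otimes_A M$.

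L'obstacle principal est pr\'ecis\'ement de v\'erifier que ces deux applications $(\widehat{E})^{m}\rightarrow(\widehat{E})^{n}$ co\"{\i}ncident sous les identifications canoniques $B^{k}\widehat{\otimes}_B E\cong(\widehat{E})^{k}$ --- autrement dit que $u\,\widehat{\otimes}\,1_E$ correspond \`a $u\otimes 1_{\widehat{E}}$ ---, et de s'assurer de la compatibilit\'e des isomorphismes topologiques du lemme \ref{lemme:21} avec les morphismes $u$ et $\pi$. Une fois ce point acquis, le reste n'est qu'une chasse au diagramme formelle portant sur deux suites exactes \`a droite.
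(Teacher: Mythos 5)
Votre démonstration est correcte et suit pour l'essentiel la même voie que celle de l'article : réduction au $B$-module de présentation finie $P=B\otimes_A M$ via le lemme \ref{lemme:21}, puis comparaison, au moyen d'une présentation finie et du lemme \ref{lemme:22}, des deux suites exactes à droite aboutissant à $(E\otimes_B P)^{\wedge}$ et à $\widehat{E}\otimes_B P$. Le point que vous signalez comme « obstacle » (la coïncidence des deux flèches $(\widehat{E})^m\rightarrow(\widehat{E})^n$) est réglé dans l'article par la naturalité de l'application canonique $\varphi$, c'est-à-dire par la commutativité du diagramme reliant les deux présentations.
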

\begin{proof} Cas o\`{u} $B=A$ et $\mathfrak{N}=\mathfrak{J}$. 
Notons $\varphi_{M} : M \otimes_A \widehat{E}\rightarrow M \widehat{\otimes}_A E$ l'application canonique, et soit   
$$L_{1} \rightarrow L_{0} \rightarrow M \rightarrow 0$$ 
une pr\'{e}sentation de type fini de $M$. On en d\'{e}duit le diagramme commutatif

$$
\begin{tabular}{c c c c c c c}
$L_1 \otimes \widehat{E}$ & $\rightarrow$ & $L_0 \otimes \widehat{E}$ & $\rightarrow$ & $M \otimes \widehat{E}$ & $\rightarrow$ & $0$ \\
$\downarrow$ & & $\downarrow$ & & $\downarrow$ & & \\
$L_1 \widehat{\otimes} E$ & $\rightarrow$ & $L_0 \widehat{\otimes} E$ & $\rightarrow$ & $M \widehat{\otimes} E$ & $\rightarrow$ & $0$
\end{tabular}
$$
o\`{u} les lignes sont exactes (la deuxi\`{e}me en vertu du lemme pr\'{e}%
c\'{e}dent); et comme $\varphi_{L_1}$ et $\varphi_{L_0}$ sont
bijectives, $\varphi_ M $ est bijective.

%Soient $M_{1}\overset{u_{1}}{\rightarrow }M_{2}\overset{u_{2}}{\rightarrow }%
%M_{3}$ deux homomorphismes de modules de type fini.
%
%
%On en d\'{e}duit que si
%$L_{1}\overset{u}{\rightarrow }L_{0}\overset{v}{%
%\rightarrow }M\rightarrow 0$ une pr\'{e}sentation finie de
%$M$ alors la suite
%$L_{1}\ \widehat{\otimes}_A E \rightarrow L_{0}
%\widehat{\otimes}_A E \rightarrow  M \widehat{\otimes}_A E \rightarrow 0$
%est exacte.

%Montrons que l'homomorphisme canonique
%Les deux lignes sont exactes et les $\varphi_{L_{i}}$ $(i=1,2)$ sont
%des isomorphismes. D'apr\`{e}s le lemme des cinq, $\varphi _{M}$ est un
%isomorphisme.

Cas g\'{e}n\'{e}ral. Comme $B \otimes_A M$ est un $B$-module de pr\'{e}sentation
finie et $\mathfrak{N}$ v\'{e}rife la propri\'{e}t\'{e} {\normalfont (\textbf{APf})}, il r\'{e}sulte du cas pr\'{e}c\'{e}dent que $(E \otimes_{B}(B\otimes_{A}M))^{\widehat{ }}$ est
isomorphe \`{a} $\widehat{E}\otimes_{B}(B\otimes_{A}M)$. D'autre
part, d'apr\`{e}s le lemme \ref{lemme:21}, l'isomorphisme canonique $E \otimes _{A} M \rightarrow E \otimes_{B}(B \otimes_{A} M)$ est un
isomorphisme topologique, donc $(E\otimes _{A}M)^{\widehat{}}$ est
isomorphe\ \`{a} $(E\otimes _{B}(B\otimes _{A}M))^{\widehat{}}$. On conclut que $(E\otimes_{A}M)^{\widehat{}}$ est isomorphe \`{a} $\widehat{E} \otimes_{A} M$.
\end{proof}

\begin{corollaire}
Sous les hypoth\`{e}ses de la proposition  pr\'ec\'edente,
si $E$ est un $B$-module projectif de type fini
alors le produit tensoriel compl\'{e}t\'{e}
$E \widehat{\otimes}_{A}M$ est isomorphe \`{a}
$\widehat{B}\otimes _{B}E \otimes _{A}M$.
\end{corollaire}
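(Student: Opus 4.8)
The plan is to reduce the statement to the proposition just proved together with the obvious fact that a projective module of finite type is, locally and up to a complementary summand, free. First I would apply the preceding proposition: since $\mathfrak{N}$ satisfies (\textbf{APf}) and $B\otimes_A M$ is of finite presentation, and since a projective module of finite type is in particular flat, the proposition gives a canonical isomorphism $E\widehat{\otimes}_A M\xrightarrow{\ \sim\ }\widehat{E}\otimes_A M$. So everything comes down to identifying $\widehat{E}$ when $E$ is $B$-projective of finite type.

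The key step is then the identification $\widehat{E}\simeq\widehat{B}\otimes_B E$ for such an $E$, where the completion is taken for the $\mathfrak{N}$-preadic topology. I would argue as follows: there is a $B$-module $E'$ with $E\oplus E'\simeq B^n$ for some integer $n$. Since completion for the preadic topology of a module equipped with the $\mathfrak{N}$-preadic topology is given by $\varprojlim (-\,/\,\mathfrak{N}^k(-))$ and this functor commutes with finite direct sums, one gets $\widehat{E}\oplus\widehat{E'}\simeq\widehat{B^n}=\widehat{B}\,^{\,n}$; similarly $\widehat{B}\otimes_B$ commutes with finite direct sums, giving $(\widehat{B}\otimes_B E)\oplus(\widehat{B}\otimes_B E')\simeq\widehat{B}\otimes_B B^n=\widehat{B}\,^{\,n}$. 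The canonical $B$-linear map $\widehat{B}\otimes_B E\to\widehat{E}$ (extension of $b\otimes x\mapsto$ the image of $bx$) is compatible with these decompositions, and on $B^n$ it is the canonical isomorphism $\widehat{B}\otimes_B B^n\xrightarrow{\sim}\widehat{B}\,^{\,n}$; being a direct summand of an isomorphism, it is itself an isomorphism. Hence $\widehat{E}\simeq\widehat{B}\otimes_B E$.

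Combining the two steps yields
\[
E\widehat{\otimes}_A M\ \simeq\ \widehat{E}\otimes_A M\ \simeq\ (\widehat{B}\otimes_B E)\otimes_A M\ =\ \widehat{B}\otimes_B E\otimes_A M,
\]
which is the asserted isomorphism.

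The main obstacle I anticipate is purely bookkeeping: making sure the topologies match so that "completion" means the same thing on both sides. Concretely, one must check that the topology used in the proposition — the $\mathfrak{R}$-preadic topology on $E$ induced from $B$ — is the one for which $\widehat{B}\otimes_B B^n\xrightarrow{\sim}\widehat{B}\,^{\,n}$ is valid, and that the splitting $E\oplus E'\simeq B^n$ is compatible with these topologies (it is, since all the modules in sight carry the $\mathfrak{N}$-preadic topology and $B$-linear maps between such modules are automatically continuous). Once this compatibility is in place the argument is formal; there is no further hard point, since flatness of $E$ and the finite presentation of $B\otimes_A M$ have already been used in invoking the proposition.
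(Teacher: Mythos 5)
Your proof is correct and follows essentially the same two-step route as the paper: invoke the preceding proposition (projective of finite type $\Rightarrow$ flat) to obtain $E\widehat{\otimes}_A M\simeq\widehat{E}\otimes_A M$, then identify $\widehat{E}$ with $\widehat{B}\otimes_B E$. The only cosmetic difference is that the paper justifies this last identification by remarking that $E$ is of finite presentation, whereas you prove it directly by splitting $E$ off a finite free module and using that both completion and $\widehat{B}\otimes_B(-)$ commute with finite direct sums; both justifications are valid.
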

\begin{proof}
Le $B$-module $E$ est de pr\'{e}sentation finie donc $\widehat{E}$
isomorphe \`{a} $\widehat{B} \otimes_{B} E$ et il est $B$-plat donc
$(E \otimes _{A}M)^{\widehat{}}$ est isomorphe \`{a}
$\widehat{E}\otimes_{A} M$. On en d\'{e}duit que
$(E\otimes _{A}M)^{\widehat{}}$ est isomorphe \`{a}
$\widehat{B} \otimes_{B} E \otimes
_{A}M$.
\end{proof}

On retrouve [5, prop. (\textbf{0}.7.7.9)],
 [3, chap.3, \S 3, exer. 14] et [4, prop. 7.4.15].

Si $M$ un $A$-module s\'{e}par\'{e} et complet
pour la topologie $\mathfrak{J}$-pr\'{e}adique,
on d\'{e}signe par $M\ll X_1,..., X_n\gg$
le s\'{e}par\'{e} compl\'{e}t\'{e} de
$M[X_1,...,X_n]$ pour la topologie $\mathfrak{J}$-pr\'{e}adique.
\begin{corollaire}
Sous les hypoth\`{e}ses de la proposition  pr\'ec\'edente,
si $B$ est $\mathfrak{N}$-adique, alors:
\begin{itemize}
\item[i)]  Le $B$-module $ M\otimes_{A} B$ est s\'epar\'e et complet pour la topologie $\mathfrak{N}$-pr\'{e}adique.
\item[ii)] L'application canonique
$M\otimes_{A} (B\ll X_1,..., X_n\gg)\rightarrow
(M\otimes_{A} B)\ll X_1,..., X_n\gg$ est bijective.
\end{itemize}
\end{corollaire}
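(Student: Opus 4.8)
Dans les deux cas on se ram\`{e}ne \`{a} la proposition pr\'{e}c\'{e}dente en choisissant convenablement le $B$-module $E$, et on identifie la topologie produit tensoriel au moyen du lemme \ref{lemme:21} ; on rappelle que l'isomorphisme fourni par la proposition est l'application canonique $\varphi : M\otimes_A\widehat{E}\to M\widehat{\otimes}_A E$.

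Pour i), on applique la proposition avec $E=B$ : l'anneau $B$ est trivialement $B$-plat, et les autres hypoth\`{e}ses ($\mathfrak{N}$ v\'{e}rifie (\textbf{APf}), $B\otimes_A M$ de pr\'{e}sentation finie sur $B$) sont celles de la proposition ; l'application canonique $\varphi : M\otimes_A\widehat{B}\to M\widehat{\otimes}_A B$ est donc bijective. Comme $B$ est $\mathfrak{N}$-adique, $\widehat{B}=B$ et $\varphi$ n'est autre que l'application de compl\'{e}tion $M\otimes_A B\to(M\otimes_A B)^{\widehat{\ }}$. Or, d'apr\`{e}s le lemme \ref{lemme:21} appliqu\'{e} avec $E=B$, la topologie produit tensoriel sur $M\otimes_A B$ qui sert \`{a} d\'{e}finir ce compl\'{e}t\'{e} est la topologie $\mathfrak{N}$-pr\'{e}adique ; la bijectivit\'{e} de $\varphi$ signifie alors pr\'{e}cis\'{e}ment que $M\otimes_A B$ est s\'{e}par\'{e} et complet pour la topologie $\mathfrak{N}$-pr\'{e}adique, ce qui \'{e}tablit i).

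Pour ii), on applique la proposition avec $E=B[X_1,\dots,X_n]$ muni de la topologie $\mathfrak{N}$-pr\'{e}adique comme $B$-module : ce module est $B$-plat (il est m\^{e}me libre) et les autres hypoth\`{e}ses sont inchang\'{e}es, de sorte que l'application canonique $\varphi : M\otimes_A\widehat{B[X_1,\dots,X_n]}\to (B[X_1,\dots,X_n])\widehat{\otimes}_A M$ est bijective. Par d\'{e}finition, $\widehat{B[X_1,\dots,X_n]}=B\ll X_1,\dots,X_n\gg$ ; d'autre part $B[X_1,\dots,X_n]\otimes_A M$ s'identifie \`{a} $(M\otimes_A B)[X_1,\dots,X_n]$, qui porte (lemme \ref{lemme:21}) la topologie $\mathfrak{N}$-pr\'{e}adique, les $\mathfrak{N}^{k}$ agissant coefficient par coefficient ; son s\'{e}par\'{e} compl\'{e}t\'{e} est donc $\varprojlim_{k}\big((M\otimes_A B)/\mathfrak{N}^{k}(M\otimes_A B)\big)[X_1,\dots,X_n]$, c'est-\`{a}-dire $(M\otimes_A B)\ll X_1,\dots,X_n\gg$, notation l\'{e}gitime d'apr\`{e}s i). Compte tenu de ces identifications, $\varphi$ est l'isomorphisme voulu $M\otimes_A(B\ll X_1,\dots,X_n\gg)\to (M\otimes_A B)\ll X_1,\dots,X_n\gg$, et l'on v\'{e}rifie que c'est l'application canonique de l'\'{e}nonc\'{e}.

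Le seul travail effectif est de nature routini\`{e}re : reconna\^{i}tre $B[X_1,\dots,X_n]\otimes_A M$ comme $(M\otimes_A B)[X_1,\dots,X_n]$ muni de sa topologie $\mathfrak{N}$-pr\'{e}adique, identifier la limite projective ci-dessus \`{a} $(M\otimes_A B)\ll X_1,\dots,X_n\gg$, et v\'{e}rifier que l'isomorphisme donn\'{e} par la proposition co\"{i}ncide avec la fl\`{e}che canonique de l'\'{e}nonc\'{e} ; c'est ce dernier point, cons\'{e}quence de la naturalit\'{e} de $\varphi$, qui demande un peu d'attention.
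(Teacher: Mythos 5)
Your proof is correct and follows exactly the paper's route: the paper's entire proof is the one-line remark that it suffices to apply the preceding proposition with $E=B$ for i) and $E=B[X_1,\dots,X_n]$ for ii). You have merely spelled out the identifications (via Lemme \ref{lemme:21} and the definition of $M\ll X_1,\dots,X_n\gg$) that the paper leaves implicit.
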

\begin{proof}
Il suffit d'appliquer la proposition pr\'ec\'edente \`a $E=B$ pour i) et \`a $E=B[X_1,...,X_n]$ pour ii).
\end{proof}
Si $B=A$, on retrouve l'isomorphisme utilis\'e dans
la d\'emonstration de la Proposition 8.4.6 de [4].

\section{Une application}
Rappelons que si $\mathfrak{J}$ est un id\'{e}al
de type fini de $A$ alors
$\widehat{\mathfrak{J}^{n}} =\mathfrak{J}^{n} \widehat{A}$
pour tout entier $n \geq 1$.

Le r\'{e}sultat suivant est bien connu [7, th.17.4].
Sa d\'{e}monstration est directe.
Nous la reproduisons avec quelques corrections de d\'{e}tail.

\begin{lemme}
Soient $A$ un anneau, $\mathfrak{J}$ un id\'{e}al de $A$ et $E$ un $A$-module.
Si $\mathfrak{J}$  est de type fini alors $\widehat{\mathfrak{J}^{n}E}=\mathfrak{J}^{n} \widehat{E}$ pour tout entier $n\geq 1$.
\end{lemme}
\begin{proof}
Cas o\`{u} $E$ est s\'{e}par\'{e}.

Soit $x^{\ast} \in \overline{\mathfrak{J}^{n}E}$,
o\`{u} $\overline{\mathfrak{J}^{n}E}$ est
l'adh\'{e}rence de $\mathfrak{J}^{n}E $dans $\widehat{E}$.
Pour tout entier $i\geq 1 $, il existe
$x_{i}\in \mathfrak{J}^{n} E$, $ y_{i} \in \overline{\mathfrak{J}^{n+i}E}$,  tels que $x_{i}=x^{\ast }+y_{i}$. D'o\`{u} $x_{i}-x_{i-1}=y_{i}-y_{i-1} \in \mathfrak{J}^{n}(\mathfrak{J}^{i-1}E)$, pour tout entier $i \geq 2$. Soit $a_{1},...,a_{m}$ un syst\`{e}me de g\'{e}n\'{e}rateurs de $\mathfrak{J}^{n}$. L'\'{e}l\'{e}ment $x_{i}-x_{i-1}$ s'\'{e}crit sous la forme $x_{i}-x_{i-1}= \sum_{j} a_{j}b_{ji}$, o\`{u} $b_{ji} \in \mathfrak{J}^{i-1}E$. D'o\`{u} $x_{i}-x_{1}=\sum_{j} a_{j}(\sum_k b_{jk})$. Si $b_{j}^{\ast}=  \sum b_{jk}$, alors $x^{\ast}= \lim x_{i}= \sum_j a_{j}b_{j}^{\ast } + x_{1} \in \mathfrak{J}^{n} \overline{E}.$ L'autre inclusion
est claire.

Cas g\'{e}n\'{e}ral.

Si $E_{0}$ est le module s\'{e}par\'{e} associ\'{e} \`{a} $E$, $\widehat{E}$ est le compl\'{e}t\'{e} de $E_{0}$, $\widehat{\mathfrak{J}^{n}E}$ s'identifie \`{a} $%
\overline{\mathfrak{J}^{n}E_{0}}$ et $\mathfrak{J}^{n} \widehat{E}$ \ s'identifie \`{a} $\mathfrak{J}^{n} \overline{E_{0}}$. L'\'{e}galit\'{e} r\'{e}sulte du cas pr\'{e}c\'{e}dent.
\end{proof}

\begin{proposition} \label{abc}
Soient $A$ un anneau coh\'{e}rent, $\mathfrak{J}$ un id\'{e}al de $A$ qui v\'{e}rifie la propri\'et\'e {\normalfont (\textbf{APf})}, $E$ un $A$-module et $\widehat{E}$ son s\'{e}par\'{e} compl\'{e}t\'{e} pour la topologie $\mathfrak{J}$-pr\'{e}adique. On suppose que $E$ est $A$-plat. Alors:
\begin{enumerate}
\item[i)] $\widehat{E}$ est $A$-plat.
\item[ii)] Si de plus $\mathfrak{J}$ est de type fini et contenu dans le radical de $A$, alors $E$ est $A$-fid\`{e}lement plat si et seulement si $\widehat{E}$ est $A$-fid\`{e}lement plat.
\end{enumerate}
\end{proposition}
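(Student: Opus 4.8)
The plan is to prove i) by the local criterion for flatness expressed via Tor, using that completion is well-behaved under the hypothesis (\textbf{APf}) thanks to Lemma~\ref{lemme:22} and the Proposition of §2. Concretely, to show $\widehat{E}$ is $A$-flat it suffices to show $\operatorname{Tor}_1^A(A/\mathfrak{a},\widehat{E})=0$ for every finitely generated ideal $\mathfrak{a}$ of $A$; since $A$ is coherent, $\mathfrak{a}$ is finitely presented, so one has a presentation $L_1\to L_0\to \mathfrak{a}\to 0$ with $L_0,L_1$ finite free, and more usefully an exact sequence $0\to\mathfrak{a}\to A\to A/\mathfrak{a}\to 0$ of finitely presented modules. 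First I would apply Lemma~\ref{lemme:22} to the exact sequence $\mathfrak{a}\otimes\text{(stuff)}\to\dots$ — more precisely, to a finite free presentation — to get that $-\,\widehat{\otimes}_A E$ preserves right-exactness, and then identify $M\widehat{\otimes}_A E$ with $\widehat{M\otimes_A E}$ or with $\widehat{E}\otimes_A M$ for finitely presented $M$ via the Proposition of §2 (applied with $B=A$, $\mathfrak{N}=\mathfrak{J}$, which is legitimate since then $A\otimes_A M=M$ is finitely presented and $\mathfrak{J}$ satisfies (\textbf{APf})). This reduces the computation of $\operatorname{Tor}_1^A(A/\mathfrak{a},\widehat{E})$ to the homology of a complex obtained by tensoring a free resolution of $A/\mathfrak{a}$ with $E$ and then completing; since $E$ is flat that complex-before-completion is exact, and completion is exact on finitely generated modules under (\textbf{APf}), so the homology vanishes.

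A cleaner route for i), which I expect to be the one actually used: take any finitely presented $A$-module $M$ and a finite free presentation $L_1\xrightarrow{u} L_0\to M\to 0$. By the Proposition (case $B=A$, $\mathfrak{N}=\mathfrak{J}$), $\widehat{E}\otimes_A N\cong N\widehat{\otimes}_A E$ naturally for every finitely presented $N$, in particular for $M$, $L_0$, $L_1$, and for $\ker(L_0\to M)$ (which is finitely generated, hence finitely presented over the coherent ring $A$). Then $\operatorname{Tor}_1^A(M,\widehat{E})$ is computed as the homology at $L_1\otimes\widehat{E}$ of $L_1\otimes\widehat{E}\to L_0\otimes\widehat{E}\to M\otimes\widehat{E}\to 0$; rewriting via the Proposition this becomes the homology of $L_1\widehat{\otimes}_A E\to L_0\widehat{\otimes}_A E\to M\widehat{\otimes}_A E\to 0$, which by Lemma~\ref{lemme:22} applied to $L_1\to L_0\to M\to 0$ (an exact sequence of finite, in fact finitely presented, modules) is exact on the left as well because $E$ flat makes $L_1\otimes E\to L_0\otimes E\to M\otimes E\to 0$ exact with strict maps, and completion preserves this. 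Hence $\operatorname{Tor}_1^A(M,\widehat{E})=0$ for all finitely presented $M$, so $\widehat{E}$ is $A$-flat. The main obstacle is bookkeeping: making sure every module to which the Proposition of §2 is applied is genuinely finitely presented (this is exactly where coherence of $A$ enters, to promote "finitely generated submodule" to "finitely presented"), and that the identifications $\widehat{E}\otimes_A N\cong N\widehat{\otimes}_A E$ are natural enough to assemble into a morphism of complexes.

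For ii), the plan is routine once i) is in hand. One direction is immediate: if $\widehat{E}$ is faithfully flat over $A$ then, since $\mathfrak{J}\subset\operatorname{rad}(A)$ and $E\to\widehat{E}$ is the $\mathfrak{J}$-adic completion with $\mathfrak{J}$ finitely generated (so $\widehat{E}/\mathfrak{J}\widehat{E}\cong E/\mathfrak{J}E$ by the Lemma above, $\widehat{\mathfrak{J}^nE}=\mathfrak{J}^n\widehat{E}$), faithful flatness of $\widehat{E}$ forces $E$ to be faithfully flat — indeed $E$ is flat by hypothesis, and for any maximal ideal $\mathfrak{m}$ of $A$ one has $\mathfrak{J}\subset\mathfrak{m}$, so $E/\mathfrak{m}E=\widehat{E}/\mathfrak{m}\widehat{E}\neq 0$, giving $E\otimes_A(A/\mathfrak{m})\neq0$ for all maximal $\mathfrak{m}$, which for a flat module is equivalent to faithful flatness. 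Conversely, if $E$ is faithfully flat, then $\widehat{E}$ is flat by i), and for every maximal ideal $\mathfrak{m}\supset\mathfrak{J}$ we get $\widehat{E}\otimes_A(A/\mathfrak{m})=\widehat{E}/\mathfrak{m}\widehat{E}=E/\mathfrak{m}E\neq 0$ (using $\mathfrak{J}\subset\mathfrak{m}$ and the Lemma), and since $\mathfrak{J}\subset\operatorname{rad}(A)$ every maximal ideal contains $\mathfrak{J}$, so $\widehat{E}\otimes_A k(\mathfrak{m})\neq0$ for all maximal $\mathfrak{m}$; a flat module with this property is faithfully flat. The only point requiring care is invoking $\widehat{E}/\mathfrak{J}\widehat{E}\cong E/\mathfrak{J}E$ and more generally $\widehat{E}/\mathfrak{m}\widehat{E}\cong E/\mathfrak{m}E$, which follows from the preceding Lemma together with $\mathfrak{J}\subset\mathfrak{m}$.
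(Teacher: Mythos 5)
Your proposal is correct and follows essentially the same route as the paper: for i) the paper takes a finitely generated ideal $\mathfrak{a}$ (finitely presented by coherence), identifies $\mathfrak{a}\otimes_A\widehat{E}$ with $(\mathfrak{a}\otimes_A E)^{\widehat{}}$ via the Proposition of Section 2 applied with $B=A$, and obtains injectivity of $(\mathfrak{a}\otimes_A E)^{\widehat{}}\to\widehat{E}$ from Lemma \ref{lemme:22}, concluding that $\mathfrak{a}\otimes_A\widehat{E}\to\widehat{E}$ is injective; for ii) it uses $\widehat{\mathfrak{J}E}=\mathfrak{J}\widehat{E}$ and the chain of equivalences $\mathfrak{m}E\neq E\iff\mathfrak{m}\widehat{E}\neq\widehat{E}$ exactly as you do. One caution about your ``cleaner route'': $\operatorname{Tor}_1^A(M,\widehat{E})$ is \emph{not} the homology at $L_1\otimes\widehat{E}$ of the truncated complex $L_1\otimes\widehat{E}\to L_0\otimes\widehat{E}\to M\otimes\widehat{E}\to 0$ --- exactness at $L_0\otimes\widehat{E}$ is automatic by right-exactness of the tensor product, and the kernel of $L_1\otimes\widehat{E}\to L_0\otimes\widehat{E}$ also contains contributions from $\ker(L_1\to K)$ where $K=\ker(L_0\to M)$, so its vanishing is neither needed nor what Lemma \ref{lemme:22} delivers. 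To make that version work you must pass to $K$ itself (finitely generated, hence finitely presented by coherence), identify $K\otimes_A\widehat{E}$ with $(K\otimes_A E)^{\widehat{}}$, and apply Lemma \ref{lemme:22} to $0\to K\to L_0$ to get injectivity of $K\otimes_A\widehat{E}\to L_0\otimes_A\widehat{E}$, which is $\operatorname{Tor}_1^A(M,\widehat{E})=0$. Your first formulation, which is precisely this with $K=\mathfrak{a}$ and $L_0=A$, is exactly the paper's argument and already suffices, since flatness only needs to be tested on finitely generated ideals.
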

\begin{proof}
i) Soit $\mathfrak{a}$ est un id\'{e}al de type fini de A. On a le diagramme commutatif
$$
\begin{tabular}{c l l}
$\mathfrak{a} \otimes_A \widehat{E}$ & $\rightarrow \; \widehat{E}$ \\
$\downarrow$ & $\nearrow $ \\
$(\mathfrak{a} \otimes E)^{\widehat{}}$
\end{tabular}
$$
D'apr\`{e}s le lemme \ref{lemme:22}, l'application $(\mathfrak{a} \otimes_A E)^{\widehat{}} \rightarrow \widehat{E}$ injective, et  comme  $\mathfrak{a}$  est de prsentation finie, il r\'{e}sulte de la proposition pr\'{e}c\'{e}dente,  que  l'application, $\mathfrak{a} \otimes \widehat{E}\rightarrow (\mathfrak{a} \otimes_A E)^{\widehat{}}$ est bijective. On en d\'{e}duit que l'application $\mathfrak{a} \otimes_A \widehat{E} \rightarrow \widehat{E}$ est injective.

%L'anneau $A$ est coh\'{e}rent donc $\mathfrak{a}$
%est de pr\'{e}sentation finie. D'apr\`{e}s ce qui pr\'{e}c\`{e}de,
%l'homomorphisme
%$(\mathfrak{a} \otimes_A E)^{\widehat{}} \rightarrow \widehat{E}$
%est injectif et l'homomorphisme
%$\mathfrak{a} \otimes \widehat{E}\rightarrow (\mathfrak{a} \otimes_A E)^{\widehat{}}$ est bijectif donc l'homomorphisme $\mathfrak{a} \otimes_A \widehat{E} \rightarrow \widehat{E}$ est injectif.

ii) On a $\frac{\widehat{E}}{\widehat{\mathfrak{J}E}} =\frac{E}{\mathfrak{J}E}$. D'apr\`{e}s le lemme pr\'{e}c\'{e}dent on a $\widehat{\mathfrak{J}E}=\mathfrak{J}\widehat{E}\ $. Donc $\frac{\widehat{E}}{\mathfrak{J}\widehat{E}} =\frac{E}{\mathfrak{J}E}$. Si $\mathfrak{m}$ un id\'{e}al maximal de $A$ alors
$\mathfrak{J}$ est contenu dans $\mathfrak{m}$.
Le r\'{e}sultat d\'{e}coule des \'{e}quivalences suivantes:

$$ \mathfrak{m}E \neq E \;\; \Longleftrightarrow \;\; \mathfrak{m} \frac{E}{\mathfrak{J}E} \neq \frac{E}{\mathfrak{J}E} \;\; \Longleftrightarrow \;\;  \mathfrak{m} \frac{\widehat{E}}{\mathfrak{J}\widehat{E}} \neq \frac{\widehat{E}}{\mathfrak{J}\widehat{E}} \;\; \Longleftrightarrow \;\; \mathfrak{m}\widehat{E} \neq \widehat{E}.$$
Si on prend pour $E$ l'anneau $A$,
l'homomorphisme canonique $A \rightarrow
\widehat{A}$ est plat.
\end{proof}

Si $A$ est noeth\'{e}rien on retrouve
[1, lemme 1.1] et [9, th.0.1].

\begin{Remarques}
\begin{enumerate}
\item[i) ] Les exemples cit\'{e}s dans l'introduction
v\'{e}rifient les hypoth\`{e}ses de la proposition.
\item[ii) ] Si $A$ est un anneau noeth\'{e}rien,
$E$ est $A$-plat et $\widehat{E}$ est
$A$-fid\`{e}lement plat alors $E$ est $A$-fid\`{e}lement plat.
Cela r\'{e}sulte de
l' isomorphisme $\frac{A}{\mathfrak{m}}\otimes_A \widehat{E} \rightarrow
\frac{A}{ \mathfrak{m} } \widehat{\otimes }_A E$
o\`{u} $ \mathfrak{m} $ est un id\'{e}al maximal de $A$.
\item[iii) ] M\^{e}me si $A$ est local noeth\'{e}rien,
il peut se faire que $E$ soit s\'{e}par\'{e} et que
$\widehat{E}$\ soit $A$-fid\`{e}lement plat sans que $E$ soit
$A$-plat. On prend pour $A$ l'anneau donn\'{e} dans
[3, chap. 3, \S 3, exer. 14]
et pour $E$ l'anneau $B$.
\end{enumerate}
\end{Remarques}

\begin{corollaire}
Soient $A$ un anneau coh\'erent et $\mathfrak{J}$
un id\'eal de $A$ qui v\'erifie  la propri\'{e}t\'{e}
{\normalfont (\textbf{APf})}.
\begin{itemize}
\item[i) ] Si $A$ est $\mathfrak{J}$-adique alors
$ A \ll X_1, ... , X_n \gg $ est $A$-plat.
\item[ii) ] Si $\mathfrak{J}$ est de type fini alors
$ \widehat{A} \ll X_1, ... , X_n \gg $ est $A$-plat.
\end{itemize}
\end{corollaire}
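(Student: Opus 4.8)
The plan is to obtain both assertions as immediate consequences of Proposition \ref{abc}: in each case one chooses for the module $E$ a free module (over $A$ in i), over $\widehat{A}$ in ii)), checks that it is $A$-flat, and then identifies its $\mathfrak{J}$-preadic separated completion $\widehat{E}$ with the restricted-power-series ring in question. Note that the pair $(A,\mathfrak{J})$ satisfies the hypotheses of Proposition \ref{abc} throughout, since $A$ is coherent and $\mathfrak{J}$ verifies (APf) by assumption; the extra hypothesis in each item ($A$ being $\mathfrak{J}$-adic, resp. $\mathfrak{J}$ of finite type) will only be used to make sense of the notation $\ll\ \gg$.

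For i): since $A$ is $\mathfrak{J}$-adic it is separated and complete, so by definition $A\ll X_1,\dots,X_n\gg$ is the $\mathfrak{J}$-preadic separated completion of $E:=A[X_1,\dots,X_n]$. As $E$ is a free $A$-module it is $A$-flat, hence Proposition \ref{abc}(i) applies and gives directly that $\widehat{E}=A\ll X_1,\dots,X_n\gg$ is $A$-flat.

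For ii): first apply Proposition \ref{abc}(i) with $E=A$ to get that $\widehat{A}$ is $A$-flat (this is the remark recorded at the end of the proof of that proposition). Since $\mathfrak{J}$ is of finite type, $\widehat{A}$ is separated and complete for its $\mathfrak{J}\widehat{A}$-adic topology — this is where the finiteness hypothesis enters, via the fact recalled at the beginning of \S3 that $\widehat{\mathfrak{J}^{n}}=\mathfrak{J}^{n}\widehat{A}$ — so $\widehat{A}\ll X_1,\dots,X_n\gg$ is defined; and because $\mathfrak{J}$ and $\mathfrak{J}\widehat{A}$ induce the same adic topology on $\widehat{A}[X_1,\dots,X_n]$, this ring is precisely the $\mathfrak{J}$-preadic separated completion of $E:=\widehat{A}[X_1,\dots,X_n]$. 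Now $E$ is free over $\widehat{A}$, hence $\widehat{A}$-flat, and $\widehat{A}$ is $A$-flat, so $E$ is $A$-flat by transitivity of flatness along $A\to\widehat{A}$. A second application of Proposition \ref{abc}(i) to the pair $(A,\mathfrak{J})$ then yields that $\widehat{E}=\widehat{A}\ll X_1,\dots,X_n\gg$ is $A$-flat.

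The argument is essentially formal once Proposition \ref{abc} is in hand; the only points deserving a word of care are the identification of the abstract separated completion $\widehat{E}$ with the explicitly defined ring $\ll\ \gg$, and — in part ii) — the verification that $\widehat{A}$ is separated and complete, which is exactly where one needs $\mathfrak{J}$ to be of finite type. I do not expect any genuine obstacle beyond these bookkeeping checks and the (standard) transitivity of flatness.
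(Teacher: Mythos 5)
Your proof is correct and follows essentially the same route as the paper: both items are obtained by applying Proposition \ref{abc} i) to the free (hence $A$-flat) polynomial module and identifying its $\mathfrak{J}$-preadic separated completion with the ring $\ll\ \gg$, with Lemme 3.1 ($\widehat{\mathfrak{J}^{n}E}=\mathfrak{J}^{n}\widehat{E}$) invoked in ii) to see that $\widehat{A}$ is separated and complete for the $\mathfrak{J}$-preadic topology. If anything, your ii) is slightly tidier than the paper's, which factors the map as $A\rightarrow\widehat{A}[X_1,\dots,X_n]\rightarrow\widehat{A}\ll X_1,\dots,X_n\gg$ and asserts flatness of the second arrow, whereas you apply the proposition over $A$ directly to the $A$-flat module $\widehat{A}[X_1,\dots,X_n]$, which uses only the stated hypotheses on $A$ and $\mathfrak{J}$.
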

\begin{proof}
\begin{itemize}
\item[i) ] L'anneau $ A \ll X_1, ... , X_n \gg $
est le s\'epar\'e compl\'et\'e de $A [ X_1, ... , X_n ]$
pour la topologie $\mathfrak{J}$-pr\'eadique.
Il suffit d'appliquer la proposition pr\'ec\'edente.
\item[ii) ] La proposition pr\'ec\'edente entra\^{i}ne que l'homomorphisme $A \rightarrow \widehat{A} [ X_1, ... , X_n ]$ est plat et puis que l'homomorphisme $\widehat{A} [ X_1, ... , X_n ] \rightarrow \widehat{A} \ll X_1, ... , X_n \gg$ est aussi plat car, d'apr\`es le lemme pr\'ec\'edent, la topologie de $\widehat{A}$ est la topologie $\mathfrak{J}$-pr\'eadique.
\end{itemize}
\end{proof}

\section{Une cact\'{e}risation}

\begin{lemme}
Soient $A$ un anneau, $\mathfrak{J}$ un id\'{e}al de $A$
qui v\'{e}rifie la propri\'{e}t\'{e} {\normalfont (\textbf{APf})}
et $M$ un $A$-module de type fini. Alors $\cap \mathfrak{J}^{n}M$
est l'ensemble des $x \in M$ tels que $x \in \mathfrak{J}x$.
\end{lemme}
\begin{proof}
Soit $x\in \cap \mathfrak{J}^{n}M$. Par hypoth\`{e}se il
existe un entier $s>0$ tel que
$\mathfrak{J}^{s}M\cap Ax\subset \mathfrak{J}Ax$.
Donc $x\in \mathfrak{J}x$.
L'autre inclusion est claire.
\end{proof}

La proposition suivante compl\`{e}te la proposition
7.4.16 de [4].

\begin{proposition} \label{prop3}
Soient $A$ un anneau et $\mathfrak{J}$ un id\'{e}al
de $A$ qui v\'{e}rifie la propri\'{e}t\'{e}
{\normalfont (\textbf{APf})}. Alors les conditions suivantes
sont \'{e}quivalentes:
\begin{enumerate}
\item[a) ] $\mathfrak{J}$ contenu dans le radical de $A$.
\item[b) ] Tout $A$-module de type fini est s\'{e}par\'{e}
pour la topologie $\mathfrak{J}$-pr\'{e}adique.
\item[c) ] Pour tout $A$-module $M$ de type fini,
tout sous-module de $M$ est ferm\'{e}
pour la topologie $\mathfrak{J}$-pr\'{e}adique.
\end{enumerate}
Si en outre $A$ un anneau coh\'{e}rent et $\mathfrak{J}$ est un id\'{e}al de type fini, les conditions pr\'{e}c\'{e}dentes sont \'{e}quivalentes aux suivantes:
\begin{enumerate}
\item[d) ] Pour tout $A$-module $E$ fid\`{e}lement plat,
le $A$-module $\widehat{E}$ est fid\`{e}lement plat.
\item[e) ] L'homomorphisme $A \rightarrow \widehat{A}\ $
est fid\`{e}lement
plat.
\end{enumerate}
\end{proposition}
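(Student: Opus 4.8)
The plan is to prove the cycle of implications (a) $\Rightarrow$ (b) $\Rightarrow$ (c) $\Rightarrow$ (a) among the first three conditions, and then, under the extra coherence and finiteness hypotheses, to close the larger cycle (a) $\Rightarrow$ (d) $\Rightarrow$ (e) $\Rightarrow$ (a), so that all five become equivalent. For the first block I would argue as follows. The implication (a) $\Rightarrow$ (b) is essentially Krull's intersection theorem in the present setting: if $M$ is of finite type and $x \in \bigcap_n \mathfrak{J}^n M$, then by the preceding lemma $x \in \mathfrak{J}x$, and since $\mathfrak{J}$ lies in the radical, Nakayama (applied to the finite type module $Ax$, or directly writing $x = ax$ with $a \in \mathfrak{J}$ so $(1-a)x = 0$ with $1-a$ a unit) gives $x = 0$; hence $M$ is separated. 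For (b) $\Rightarrow$ (c): let $N$ be a submodule of a finite type module $M$. Since $A$ satisfies (\textbf{APf}) in the relevant places — more precisely, $M/N$ is of finite type, so by (b) it is separated — the closure of $N$ in $M$ for the $\mathfrak{J}$-preadic topology is the preimage of $\bigcap_n \mathfrak{J}^n(M/N) = 0$, which is $N$ itself; so $N$ is closed. For (c) $\Rightarrow$ (a): take $\mathfrak{m}$ a maximal ideal of $A$ and suppose, for contradiction, $\mathfrak{J} \not\subset \mathfrak{m}$; then $\mathfrak{J} + \mathfrak{m} = A$, so the $\mathfrak{J}$-preadic topology on the finite type module $A/\mathfrak{m}$ is the discrete... no — rather $\mathfrak{J}(A/\mathfrak{m}) = A/\mathfrak{m}$, so $\bigcap_n \mathfrak{J}^n(A/\mathfrak{m}) = A/\mathfrak{m} \ne 0$, meaning $\{0\}$ is not closed in $A/\mathfrak{m}$, contradicting (c) applied to $M = A/\mathfrak{m}$ and the zero submodule. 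So $\mathfrak{J}$ is contained in every maximal ideal, i.e. in the radical.

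For the second block, assume now $A$ coherent and $\mathfrak{J}$ of finite type. The implication (a) $\Rightarrow$ (d) is exactly Proposition \ref{abc}(ii): under (a), $\mathfrak{J}$ is contained in the radical, so for $E$ faithfully flat (in particular flat), part (i) of that proposition makes $\widehat{E}$ flat and part (ii) makes it faithfully flat. The implication (d) $\Rightarrow$ (e) is the trivial specialization $E = A$, which is faithfully flat over itself, so $\widehat{A}$ is faithfully flat over $A$. The remaining implication (e) $\Rightarrow$ (a) is where the real work lies, and I expect it to be the main obstacle. The strategy is: faithful flatness of $A \to \widehat{A}$ forces, for every maximal ideal $\mathfrak{m}$ of $A$, that $\mathfrak{m}\widehat{A} \ne \widehat{A}$, i.e. $(A/\mathfrak{m}) \otimes_A \widehat{A} \ne 0$. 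Now using that $\mathfrak{J}$ is of finite type one has $\widehat{\mathfrak{J}^n} = \mathfrak{J}^n\widehat{A}$ (the remark opening \S 3), hence $\widehat{A}/\mathfrak{J}\widehat{A} = A/\mathfrak{J}$, and more generally the $\mathfrak{J}\widehat{A}$-adic completion of $\widehat{A}$ agrees with that of $A$. If $\mathfrak{J} \not\subset \mathfrak{m}$ then $\mathfrak{J} + \mathfrak{m} = A$, so $(A/\mathfrak{m})$ is $\mathfrak{J}$-divisible: $\mathfrak{J}(A/\mathfrak{m}) = A/\mathfrak{m}$. Tensoring with $\widehat{A}$ and using that $\widehat{A}$ is $\mathfrak{J}$-adically complete and separated (again by the finite-type hypothesis on $\mathfrak{J}$, via the lemma of \S 3), the module $(A/\mathfrak{m}) \otimes_A \widehat{A}$ would be a $\mathfrak{J}\widehat{A}$-divisible module over the $\mathfrak{J}\widehat{A}$-adically separated complete ring $\widehat{A}$, which is generated by one element, hence zero — contradicting faithful flatness. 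So $\mathfrak{J} \subset \mathfrak{m}$ for all maximal $\mathfrak{m}$, giving (a).

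The delicate point in (e) $\Rightarrow$ (a) is making rigorous the step ``$\mathfrak{J}$-divisible over a $\mathfrak{J}$-adically complete separated ring $\Rightarrow$ zero'': one must be careful that $(A/\mathfrak{m}) \otimes_A \widehat{A}$ is a \emph{finitely generated} $\widehat{A}$-module (it is cyclic) and that $\widehat{A}$ is $\mathfrak{J}\widehat{A}$-adically separated — the latter is where we invoke $\widehat{A} \cong \widehat{\widehat{A}}$ together with the coherence of $A$ (so that $\widehat{A}$ is coherent, or at least that the relevant module is separated via the (\textbf{APf}) property transported to $\widehat{A}$), after which the preceding lemma (on $\bigcap \mathfrak{J}^n M$) applied over $\widehat{A}$ forces the cyclic divisible module to vanish. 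Alternatively, and perhaps more cleanly, one argues directly: $(A/\mathfrak{m})\otimes_A \widehat A = \widehat A/\mathfrak m\widehat A$, and $\mathfrak J\widehat A \subset \mathfrak m\widehat A$ would fail, but $\mathfrak J + \mathfrak m = A$ forces $\mathfrak J\widehat A + \mathfrak m\widehat A = \widehat A$, while $\mathfrak J\widehat A$ sits in the Jacobson radical of $\widehat A$ (since $\widehat A$ is $\mathfrak J\widehat A$-adically complete, $1 + \mathfrak J\widehat A \subset \widehat A^\times$), so $\mathfrak m\widehat A = \widehat A$, contradicting faithful flatness. This last formulation is the one I would write up, as it sidesteps the divisibility bookkeeping entirely and only uses the standard fact that the ideal of definition of a complete preadic ring lies in its radical.
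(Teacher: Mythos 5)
Your proof is correct, and on the implications the paper actually writes out --- a) $\Rightarrow$ b) via the lemma on $\bigcap_n\mathfrak{J}^nM$ together with Nakayama, a) $\Rightarrow$ d) via assertion ii) of Proposition \ref{abc}, and d) $\Rightarrow$ e) by specializing to $E=A$ --- you follow exactly the same route as the author. The genuine difference is that the paper disposes of all the remaining implications (b) $\Rightarrow$ c), c) $\Rightarrow$ a) and e) $\Rightarrow$ a)) with a single citation of [6, th.~56], whereas you prove them directly. Your arguments there are sound: b) $\Rightarrow$ c) by passing to the separated quotient $M/N$; c) $\Rightarrow$ a) by observing that $\mathfrak{J}\not\subset\mathfrak{m}$ gives $\mathfrak{J}(A/\mathfrak{m})=A/\mathfrak{m}$, so $\{0\}$ is not closed in $A/\mathfrak{m}$; and e) $\Rightarrow$ a) by noting that, $\mathfrak{J}$ being of finite type, the lemma of \S 3 on $\widehat{\mathfrak{J}^nE}=\mathfrak{J}^n\widehat{E}$ identifies the topology of $\widehat{A}$ with the $\mathfrak{J}\widehat{A}$-preadic one, so that $\widehat{A}$ is $\mathfrak{J}\widehat{A}$-adically complete, $1+\mathfrak{J}\widehat{A}\subset\widehat{A}^{\times}$, and $\mathfrak{J}\widehat{A}$ lies in the Jacobson radical of $\widehat{A}$; then $\mathfrak{J}+\mathfrak{m}=A$ would force $\mathfrak{m}\widehat{A}=\widehat{A}$, contradicting faithful flatness. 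This is essentially the standard proof behind the cited theorem of Matsumura, and your second, ``cleaner'' formulation of e) $\Rightarrow$ a) is indeed the one to keep: the divisibility variant you sketch first needs the extra bookkeeping you yourself flag, and in any case reduces to the same Nakayama-type step once $\mathfrak{J}\widehat{A}$ is known to lie in the radical of $\widehat{A}$. The net effect is a self-contained proof where the paper relies on an external reference; nothing is missing.
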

\begin{proof}
$a) \Longrightarrow b)$ L'implication r\'{e}sulte du lemme pr\'{e}c\'{e}dent.

$a) \Longrightarrow d)$ L'implication r\'{e}sulte
de l'assertion $ii)$ de la proposition \ref{abc}.

$d)\Longrightarrow e)$ On prend $E=A$.

Pour les autres implications cf.[6. th.56].
\end{proof}

\begin{corollaire}\label{cor1}
Soient $A$ un anneau, $B$ une $A$-alg\`{e}bre,
$\mathfrak{N}$ un id\'{e}al de $B$ et $M$ un $B$-module de type fini.
Si $\mathfrak{N}$ est contenu dans le radical
de $B$ et v\'{e}rifie la propri\'{e}t\'{e}
{\normalfont{ (\textbf{APf})}}, alors pour tout
id\'{e}al $\mathfrak{a}$ de $A$ de type fini et
tout $B$-module $M$ de type fini, le $B$-module
$\mathfrak{a} \otimes
_{A} M$ est s\'{e}par\'{e} pour la topologie
$\mathfrak{N}$-pr\'{e}adique.
\end{corollaire}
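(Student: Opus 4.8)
The strategy is to reduce to the separation criterion from Proposition \ref{prop3}. Since $\mathfrak{N}$ is contained in the radical of $B$ and verifies (\textbf{APf}), condition a) of Proposition \ref{prop3} holds for the pair $(B,\mathfrak{N})$, hence also condition b): every $B$-module of finite type is separated for the $\mathfrak{N}$-preadic topology. So it suffices to show that $\mathfrak{a}\otimes_A M$ is a $B$-module of finite type. Since $\mathfrak{a}$ is a finitely generated ideal of $A$ and $M$ is a $B$-module of finite type, $\mathfrak{a}\otimes_A M$ is generated as a $B$-module by the finitely many elements $a_i\otimes m_j$, where the $a_i$ generate $\mathfrak{a}$ and the $m_j$ generate $M$; thus $\mathfrak{a}\otimes_A M$ is indeed of finite type over $B$.

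First I would invoke the equivalence $a)\Longleftrightarrow b)$ of Proposition \ref{prop3}, applied to the ring $B$ and the ideal $\mathfrak{N}$: the hypotheses "$\mathfrak{N}$ contained in the radical of $B$" and "$\mathfrak{N}$ verifies (\textbf{APf})" are exactly what is needed, so every finitely generated $B$-module is $\mathfrak{N}$-preadically separated. Then I would observe that $\mathfrak{a}\otimes_A M$ carries a natural $B$-module structure (through the $B$-module structure on $M$), and exhibit an explicit finite generating set as above. Combining these two facts gives the claim immediately.

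The only point that requires a small amount of care — and the closest thing to an obstacle — is checking that $\mathfrak{a}\otimes_A M$ really is finitely generated over $B$ and that the $\mathfrak{N}$-preadic topology referred to in the statement is the one relative to this $B$-module structure. This is routine: a surjection $A^{n}\twoheadrightarrow\mathfrak{a}$ of $A$-modules yields, after applying $-\otimes_A M$ (right exactness of tensor product), a surjection $M^{n}\twoheadrightarrow\mathfrak{a}\otimes_A M$ of $B$-modules, and $M^{n}$ is finitely generated over $B$ because $M$ is. Hence $\mathfrak{a}\otimes_A M$ is a finitely generated $B$-module, and Proposition \ref{prop3} b) applies verbatim.

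Note finally that coherence of $A$ or finite generation of $\mathfrak{N}$ are not needed for this corollary — only the implication $a)\Longrightarrow b)$ of Proposition \ref{prop3}, which holds under (\textbf{APf}) alone together with the radical hypothesis. This makes the argument short and self-contained.
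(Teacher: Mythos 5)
Your proof is correct and follows essentially the same route as the paper: the paper's own proof simply notes that $\mathfrak{a}\otimes_A M$ is a $B$-module of finite type and invokes the preceding proposition (the implication $a)\Rightarrow b)$ of Proposition \ref{prop3}). Your additional verification of the finite generation of $\mathfrak{a}\otimes_A M$ via the surjection $M^{n}\twoheadrightarrow\mathfrak{a}\otimes_A M$ is a welcome elaboration of a step the paper leaves implicit.
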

\begin{proof}
Le corollaire d\'{e}coule de la proposition
pr\'{e}c\'{e}dente puisque le $B$-module
$\mathfrak{a} \otimes_{A} M$ est de type fini.
\end{proof}

\begin{corollaire}
Soient $A$ un anneau coh\'{e}rent et $\mathfrak{J}$ est un id\'{e}al de type fini qui v\'{e}rifie la propri\'{e}t\'{e} {\normalfont{ (\textbf{APf})}} et contenu dans le radical de $A$.
Alors $A$ est noeth\'{e}rien si et seulement si $A/\mathfrak{J}$ est noeth\'{e}rien.
\end{corollaire}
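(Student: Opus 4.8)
\textsc{Plan de la d\'{e}monstration.} L'implication ``$A$ noeth\'{e}rien $\Rightarrow$ $A/\mathfrak{J}$ noeth\'{e}rien'' \'{e}tant imm\'{e}diate (un quotient d'un anneau noeth\'{e}rien est noeth\'{e}rien), tout le travail porte sur la r\'{e}ciproque: on suppose $A/\mathfrak{J}$ noeth\'{e}rien et l'on veut conclure que $A$ est noeth\'{e}rien.

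Le plan est de ramener l'\'{e}nonc\'{e} \`{a} la noeth\'{e}rianit\'{e} de $\widehat{A}$. Comme $A$ est coh\'{e}rent, que $\mathfrak{J}$ est de type fini, v\'{e}rifie {\normalfont (\textbf{APf})} et est contenu dans le radical de $A$, la proposition \ref{prop3} (\'{e}quivalence de a) avec e)) montre que l'homomorphisme canonique $A \rightarrow \widehat{A}$ est fid\`{e}lement plat. Or un homomorphisme fid\`{e}lement plat descend la condition de cha\^{i}ne ascendante: pour toute suite croissante $(\mathfrak{a}_{n})_{n}$ d'id\'{e}aux de $A$, la suite $(\mathfrak{a}_{n}\widehat{A})_{n}$ est croissante, donc stationne d\`{e}s que $\widehat{A}$ est noeth\'{e}rien, et comme $\mathfrak{a}_{n}\widehat{A} \cap A = \mathfrak{a}_{n}$ (l'homomorphisme $A/\mathfrak{a}_{n} \rightarrow \widehat{A}/\mathfrak{a}_{n}\widehat{A}$, d\'{e}duit de $A \rightarrow \widehat{A}$ par le changement de base $A \rightarrow A/\mathfrak{a}_{n}$, est fid\`{e}lement plat donc injectif), la suite $(\mathfrak{a}_{n})_{n}$ stationne aussi. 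Il suffit donc d'\'{e}tablir que $\widehat{A}$ est noeth\'{e}rien.

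Pour cela, j'appliquerais le th\'{e}or\`{e}me classique assurant qu'un anneau s\'{e}par\'{e} et complet pour la topologie $I$-adique d'un id\'{e}al de type fini $I$, dont le quotient par $I$ est noeth\'{e}rien, est lui-m\^{e}me noeth\'{e}rien (voir par exemple [7]). On v\'{e}rifie ses hypoth\`{e}ses pour l'anneau $\widehat{A}$ et l'id\'{e}al $I = \mathfrak{J}\widehat{A}$: puisque $\mathfrak{J}$ est de type fini on a $\widehat{\mathfrak{J}^{n}} = \mathfrak{J}^{n}\widehat{A}$ (rappel\'{e} au d\'{e}but du \S 3), donc la topologie de $\widehat{A}$ est la topologie $\mathfrak{J}\widehat{A}$-adique et $\widehat{A}$ est s\'{e}par\'{e} et complet pour celle-ci; l'id\'{e}al $\mathfrak{J}\widehat{A}$ est de type fini; et enfin, d'apr\`{e}s la d\'{e}monstration de l'assertion ii) de la proposition \ref{abc} appliqu\'{e}e \`{a} $E = A$, on a $\widehat{A}/\mathfrak{J}\widehat{A} \simeq A/\mathfrak{J}$, qui est noeth\'{e}rien par hypoth\`{e}se. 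Il en r\'{e}sulte que $\widehat{A}$ est noeth\'{e}rien, d'o\`{u} la conclusion.

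L'obstacle principal n'est pas technique: il tient \`{a} ce que toute la substance est d\'{e}j\`{a} contenue dans les r\'{e}sultats pr\'{e}c\'{e}dents. C'est la proposition \ref{prop3} qui, sous la seule hypoth\`{e}se {\normalfont (\textbf{APf})} (plus faible que la noeth\'{e}rianit\'{e}), garantit la fid\`{e}le platitude de $A \rightarrow \widehat{A}$; une fois celle-ci acquise, la descente fid\`{e}lement plate de la condition de cha\^{i}ne ascendante et le th\'{e}or\`{e}me de compl\'{e}tion rappel\'{e} ci-dessus sont tous deux standard, et rien d'autre n'est n\'{e}cessaire.
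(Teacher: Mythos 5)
Votre démonstration est correcte et suit essentiellement la même voie que celle de l'article : on montre que $\widehat{A}$ est noethérien (via $\widehat{A}/\mathfrak{J}\widehat{A}\simeq A/\mathfrak{J}$ et le fait que $\mathfrak{J}\widehat{A}$ est de type fini), puis on descend la noethérianité le long de l'homomorphisme fidèlement plat $A\rightarrow\widehat{A}$ fourni par la proposition \ref{prop3}. Vous explicitez simplement davantage l'argument de descente et la vérification des hypothèses du théorème de complétion, que l'article laisse implicites.
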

\begin{proof}
L'anneau $\widehat{A}/\widehat{\mathfrak{J}}$ est isomorphe \`{a} $A/\mathfrak{J}$ et $\widehat{\mathfrak{J}}$ est de type fini donc $\widehat{A}$ est un
anneau noeth\'{e}rien. D'apr\`{e}s la proposition \ref{prop3}, l'homomorphisme $A \rightarrow \widehat{A}$ est fid\`{e}lement plat. Donc $A$
est un anneau noeth\'{e}rien.
\end{proof}
En particulier, si $A$ est un anneau local coh\'{e}rent et si son id\'{e}al maximal est de type fini et v\'{e}rifie la propri\'{e}t\'{e} {\normalfont{ (\textbf{APf})}} alors il est noeth\'{e}rien.

\section{Platitude du Produit tensoriel compl\'{e}t\'{e}}

La d\'{e}monstration du th\'{e}or\`{e}me  utilise le crit\`{e}re suivant :

\begin{proposition} \label{prop2}
Soient $A$ un anneau, $B$ une $A$-alg\`{e}bre, $\mathfrak{N}$
un id\'{e}al de $B$ et $M$  un $B$-module.
On suppose que pour tout id\'eal de type fini $\mathfrak{a}$  de $A$,
le $B$-module $\mathfrak{a} \otimes_{A}M$ est s\'epar\'e
pour la topologie $\mathfrak{N}$-pr\'eadique.
Si, pour tout entier $n>0$, $M/\mathfrak{N}^{n}M$
est un $A$-module plat alors $M$ est un $A$-module plat.
\end{proposition}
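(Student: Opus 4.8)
The plan is to verify $A$-flatness of $M$ through the standard criterion: $M$ is $A$-flat as soon as, for every finitely generated ideal $\mathfrak{a}$ of $A$, the canonical map $\mathfrak{a}\otimes_A M\to M$ is injective (equivalently $\mathrm{Tor}_1^A(A/\mathfrak{a},M)=0$). So I fix a finitely generated ideal $\mathfrak{a}$ of $A$ and prove this injectivity, exploiting the hypothesis on the quotients $M/\mathfrak{N}^nM$ together with the separatedness of $\mathfrak{a}\otimes_A M$.

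First I would record the identification, for each integer $n>0$, of $\mathfrak{a}\otimes_A(M/\mathfrak{N}^nM)$ with $(\mathfrak{a}\otimes_A M)/\mathfrak{N}^n(\mathfrak{a}\otimes_A M)$. Tensoring the exact sequence $\mathfrak{N}^nM\to M\to M/\mathfrak{N}^nM\to 0$ by $\mathfrak{a}$ shows that $\mathfrak{a}\otimes_A(M/\mathfrak{N}^nM)$ is the cokernel of $\mathfrak{a}\otimes_A\mathfrak{N}^nM\to\mathfrak{a}\otimes_A M$, and the image of this last map is exactly $\mathfrak{N}^n(\mathfrak{a}\otimes_A M)$: write $a\otimes\sum_i t_im_i=\sum_i t_i(a\otimes m_i)$ with $t_i\in\mathfrak{N}^n$, and conversely. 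Under this identification, and using that $M/\mathfrak{N}^nM$ is $A$-flat, the canonical map $\mathfrak{a}\otimes_A(M/\mathfrak{N}^nM)\to M/\mathfrak{N}^nM$ becomes an injection $(\mathfrak{a}\otimes_A M)/\mathfrak{N}^n(\mathfrak{a}\otimes_A M)\to M/\mathfrak{N}^nM$; moreover it fits into the evident commutative square together with $\mathfrak{a}\otimes_A M\to M$ and the two reduction maps modulo $\mathfrak{N}^n$.

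Then comes the diagram chase. Let $x\in\ker(\mathfrak{a}\otimes_A M\to M)$. Its image in $M/\mathfrak{N}^nM$ vanishes, hence by the injectivity just established its image in $(\mathfrak{a}\otimes_A M)/\mathfrak{N}^n(\mathfrak{a}\otimes_A M)$ vanishes, i.e. $x\in\mathfrak{N}^n(\mathfrak{a}\otimes_A M)$. As $n$ is arbitrary, $x\in\bigcap_{n>0}\mathfrak{N}^n(\mathfrak{a}\otimes_A M)$, which is $0$ since $\mathfrak{a}\otimes_A M$ is separated for the $\mathfrak{N}$-preadic topology. Hence $\mathfrak{a}\otimes_A M\to M$ is injective, and as $\mathfrak{a}$ ranges over all finitely generated ideals, $M$ is $A$-flat.

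I do not anticipate a genuine obstacle: the only point deserving care is the identification of the image of $\mathfrak{a}\otimes_A\mathfrak{N}^nM$ in $\mathfrak{a}\otimes_A M$ with $\mathfrak{N}^n(\mathfrak{a}\otimes_A M)$, and the compatibility of the resulting injection with the evaluation maps; once this is in place the separatedness hypothesis finishes the argument at once. The whole argument is in the spirit of the local flatness criterion — check flatness on the quotients $M/\mathfrak{N}^nM$, then lift it using separatedness.
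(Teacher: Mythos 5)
Your proof is correct and follows essentially the same route as the paper: both reduce to the injectivity of $\mathfrak{a}\otimes_A M\to M$ for $\mathfrak{a}$ finitely generated, use the right-exact sequence obtained from $\mathfrak{N}^nM\to M\to M/\mathfrak{N}^nM\to 0$ together with the flatness of $M/\mathfrak{N}^nM$ to place the kernel inside $\mathfrak{N}^n(\mathfrak{a}\otimes_A M)$ for every $n$, and conclude by the separatedness hypothesis. Your explicit identification of $\operatorname{Im}(\mathfrak{a}\otimes_A\mathfrak{N}^nM\to\mathfrak{a}\otimes_A M)$ with $\mathfrak{N}^n(\mathfrak{a}\otimes_A M)$ is exactly the step the paper leaves implicit in its ``d'o\`u le r\'esultat''.
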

\begin{proof}
Soient $\mathfrak{a}$ un id\'{e}al de type fini de $A$ et $j : \mathfrak{a} \otimes_{A}M\rightarrow M$ l'application canonique.% D'apr\`{e}s le corollaire \ref{cor1} le $B$-module $\mathfrak{a} \otimes_{A}M$ est s\'{e}par\'{e} pour la topologie $\mathfrak{R}$-pr\'{e}adique,
Il suffit de montrer que pour tout entier $n>0$, on a
$\text{Ker}j \subset \mathfrak{N}^{n}(\mathfrak{a} \otimes_{A}M)$.
Si $i : \mathfrak{N}^{n}M \rightarrow M$
est l'injection canonique
et $p : M \rightarrow M/\mathfrak{N}^{n}M$
la surjection canonique. On a le diagramme commutatif
$$\begin{array}{ccccccc}
  \mathfrak{a} \otimes_{A}  (\mathfrak{N}^{n} M)&  \rightarrow & \mathfrak{a} \otimes_{A} M  &  \rightarrow &\mathfrak{a} \otimes_{A}(M/ \mathfrak{N}^{n} M) &  \rightarrow & 0\\
   &    &   \downarrow  &    & \downarrow &    &  \\
   &    &   M  &  \rightarrow &M/  \mathfrak{N}^{n} M    &    & \\
\end{array}
$$
La deuxi\`{e}me fl\`{e}che verticale est injective donc 
$\text{Ker}j \subset \text{Ker}(1_{\mathfrak{a}} \otimes p)$, et comme la premi\`{e}re ligne est exacte donc $\text{Ker}j \subset \text{Im}(1_{\mathfrak{a}} \otimes i)$; d'o\`{u} le résultat.
\end{proof}

    \begin{theorem}\label{pro-com}
    Soient $A$ est un anneau topologique, $B$,
    $C$ deux $A$-alg\`ebres topologiques pr\'eadiques,
    $\mathfrak{m}$ un id\'eal de d\'efinition de $B$,
    $\mathfrak{n}$ un id\'eal de d\'efinition de $C$, $\mathfrak{r}$  l'id\'{e}al ${\normalfont  \text{Im}}(\mathfrak{m}\otimes_A C)+{\normalfont  \text{Im}}(B\otimes_A \mathfrak{n})$  de $B\otimes_A C$ et
    $E$ le produit tensoriel compl\'et\'e $B\widehat{\otimes}_A C$.
    On suppose que $\mathfrak{m}$ et $\mathfrak{n}$
    sont de type fini, que
    $C$ est un anneau coh\'erent,
    et que $\mathfrak{n}$ et $\mathfrak{r}E$ v\'erifient
    la propri\'et\'e {\normalfont (\textbf{APf})}.
     Si $A$ est un anneau absolument plat alors $E$ est $C$-plat.
    \end{theorem}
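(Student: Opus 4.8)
Le plan est le suivant. Posons $D=B\otimes_{A}C$ et notons encore $\mathfrak{m}$, $\mathfrak{n}$ les images de ces id\'eaux dans $D$ ; alors $\mathfrak{r}=\mathfrak{m}D+\mathfrak{n}D$ est de type fini et $E=B\widehat{\otimes}_{A}C$ est le s\'epar\'e compl\'et\'e de $D$ pour la topologie $\mathfrak{r}$-pr\'eadique ; sa topologie est la topologie $\mathfrak{r}E$-adique, donc $\mathfrak{r}E$ est contenu dans le radical de $E$. On appliquera la proposition~\ref{prop2} \`a l'anneau $C$, \`a la $C$-alg\`ebre $E$, au $C$-module $E$ et \`a l'id\'eal $\mathfrak{m}E$ de $E$ : il faudra donc v\'erifier, d'une part que $\mathfrak{a}\otimes_{C}E$ est s\'epar\'e pour la topologie $\mathfrak{m}E$-pr\'eadique pour tout id\'eal de type fini $\mathfrak{a}$ de $C$, d'autre part que $E/\mathfrak{m}^{n}E$ est un $C$-module plat pour tout entier $n\geq 1$.

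La premi\`ere condition r\'esultera de l'hypoth\`ese que $\mathfrak{r}E$ v\'erifie (\textbf{APf}). En effet, le corollaire~\ref{cor1}, appliqu\'e \`a l'anneau $C$, \`a la $C$-alg\`ebre $E$, \`a l'id\'eal $\mathfrak{r}E$ — contenu dans le radical de $E$ et v\'erifiant (\textbf{APf}) — et au $E$-module $E$, montre que $\mathfrak{a}\otimes_{C}E$ est s\'epar\'e pour la topologie $\mathfrak{r}E$-pr\'eadique ; comme $(\mathfrak{m}E)^{k}=\mathfrak{m}^{k}E\subset\mathfrak{r}^{k}E$, la topologie $\mathfrak{m}E$-pr\'eadique de $\mathfrak{a}\otimes_{C}E$ est plus fine, donc encore s\'epar\'ee.

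Pour la seconde condition, on posera $D_{n}=D/\mathfrak{m}^{n}D$ ; comme $C$ est $A$-plat, $D_{n}\simeq (B/\mathfrak{m}^{n})\otimes_{A}C$, et, $A$ \'etant absolument plat, $B/\mathfrak{m}^{n}$ est $A$-plat, d'o\`u $D_{n}$ est $C$-plat par changement de base. La topologie quotient sur $D_{n}$ (d\'eduite de la topologie $\mathfrak{r}$-pr\'eadique de $D$) co\"incide avec la topologie $\mathfrak{n}$-pr\'eadique du $C$-module $D_{n}$ : on a $\mathfrak{n}^{m}D_{n}\subset\mathfrak{r}^{m}D_{n}$, et, l'image de $\mathfrak{m}$ dans $D_{n}$ \'etant de puissance $n$-i\`eme nulle, $\mathfrak{r}^{kn}D_{n}\subset\mathfrak{n}^{(k-1)n+1}D_{n}$ pour tout $k\geq 1$. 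L'assertion~i) de la proposition~\ref{abc} (anneau coh\'erent $C$, id\'eal $\mathfrak{n}$ v\'erifiant (\textbf{APf}), $C$-module plat $D_{n}$) donnera alors que le s\'epar\'e compl\'et\'e $\widehat{D_{n}}$ de $D_{n}$ pour cette topologie est $C$-plat. Il restera \`a identifier $E/\mathfrak{m}^{n}E$ \`a $\widehat{D_{n}}$ : la surjection $D\to D_{n}$ (o\`u $D_{n}$ porte la topologie quotient) \'etant stricte, le morphisme induit $E=\widehat{D}\to\widehat{D_{n}}$ est surjectif, de noyau l'adh\'erence dans $E$ de l'image de $\mathfrak{m}^{n}D$ ; la densit\'e de $D$ dans $E$ montre que $\mathfrak{m}^{n}E\subset\mathfrak{m}^{n}D+\mathfrak{r}^{k}E$ pour tout $k$, donc que cette adh\'erence contient $\mathfrak{m}^{n}E$, et l'inclusion oppos\'ee \'equivaut \`a ce que $\mathfrak{m}^{n}E$ soit ferm\'e dans $E$, i.e. \`a ce que l'anneau $E/\mathfrak{m}^{n}E$ soit s\'epar\'e ; or l'image de $\mathfrak{r}E$ dans $E/\mathfrak{m}^{n}E$ est contenue dans son radical et v\'erifie (\textbf{APf}) (comme image d'un id\'eal la v\'erifiant), donc la proposition~\ref{prop3} (implication a) $\Rightarrow$ b)) s'applique. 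On aura ainsi $E/\mathfrak{m}^{n}E\simeq\widehat{D_{n}}$, qui est $C$-plat, et la proposition~\ref{prop2} permettra de conclure que $E$ est $C$-plat.

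L'obstacle principal sera ce dernier point, et notamment le fait que $\mathfrak{m}^{n}E$ est ferm\'e dans $E$, o\`u l'hypoth\`ese que $\mathfrak{r}E$ v\'erifie (\textbf{APf}) est essentielle. Conceptuellement, l'id\'ee-cl\'e est qu'il faut utiliser dans le crit\`ere de la proposition~\ref{prop2} l'id\'eal $\mathfrak{m}E$ et non $\mathfrak{r}E$ : les quotients $E/\mathfrak{r}^{n}E=D/\mathfrak{r}^{n}D$ ne sont pas $C$-plats en g\'en\'eral (d\'ej\`a pour $A$ un corps, $B=A[s]$, $C=A[t]$ munis de leurs topologies $(s)$- et $(t)$-pr\'eadiques), alors que la nilpotence de $\mathfrak{m}$ modulo $\mathfrak{m}^{n}$ ram\`ene $E/\mathfrak{m}^{n}E$ au compl\'et\'e $\mathfrak{n}$-adique du $C$-module plat $D_{n}$.
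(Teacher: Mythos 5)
Your proof is correct and follows essentially the same route as the paper: you verify the two hypotheses of Proposition~\ref{prop2} for the ideal $\mathfrak{m}E$, obtaining the separation of $\mathfrak{a}\otimes_{C}E$ from Corollaire~\ref{cor1} applied to $\mathfrak{r}E$, and the $C$-platitude of $E/\mathfrak{m}^{n}E$ by identifying it with the $\mathfrak{n}$-adic completion of the $C$-flat module $(B/\mathfrak{m}^{n})\otimes_{A}C$ and invoking Proposition~\ref{abc}. The only (harmless) divergence is that you establish the closedness of $\mathfrak{m}^{n}E$ via the separation of the quotient ring, whereas the paper deduces it directly from Proposition~\ref{prop3}~c).
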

    \begin{proof} L'anneau $A$ est absolument plat donc
    $(B/\mathfrak{m}^{h})\otimes_A C$ est $C$-plat;
    et comme $C$ est un anneau coh\'erent et
    $\mathfrak{n}$ v\'erifie la propri\'et\'e
    {\normalfont (\textbf{APf})}, il r\'esulte
    de la proposition \ref{abc} que son s\'epar\'e
    compl\'et\'e pour la topologie $\mathfrak{n}$-pr\'eadique
    $((B/\mathfrak{m}^{h})\otimes_A C)\widehat{ }$
    est $C$-plat.
    Notons $F$ l'anneau $B \otimes_A C$. Comme $E$
    est le s\'epar\'e compl\'et\'e  de $F$
    pour la topologie $\mathfrak{r}$-pr\'eadique, il r\'esulte de la proposition 4.1 que l'id\'eal $\mathfrak{m}^{h}E$ est ferm\'e dans $E$; d'autre part, la topologie induite sur $F/\mathfrak{m}^{h}F$ par la topologie de $F$ est la topologie  $\mathfrak{n}$-pr\'eadique ; donc $E/\mathfrak{m}^{h}E$ est isomorphe \`a  $((B/\mathfrak{m}^{h})\otimes_A C)\widehat{ }$.
    On conclut que $E/\mathfrak{m}^{h}E$ est $C$-plat.
    Pour montrer que $E$ est $C$-plat, il reste, en vertu de la proposition prc\'edente, \`a montrer que
    pour tout id\'{e}al  de type fini $\mathfrak{c}$ de $C$  le $E$-module
$\mathfrak{c} \otimes_C E$  est s\'{e}par\'{e} pour la topologie
$\mathfrak{m}E$-pr\'{e}adique; ce qui  r\'esulte du fait, que $\mathfrak{m}E$ est contenu dans
$\mathfrak{r}E$  et que,  d'apr\`es le Corollaire \ref{prop3},
le $E$-module de type fini $\mathfrak{c} \otimes_C E$ est s\'epar\'e  pour la topologie $\mathfrak{r}E$-pr\'{e}adique.
\end{proof}

 \begin{lemme}
    Sous les hypoth\`{e}ses du th\'{e}or\`{e}me pr\'{e}c\'{e}dent,
    $B\widehat{\otimes}_k C$ est noeth\'{e}rien si et seulement si
    $(B/\mathfrak{m})\otimes_A(C/\mathfrak{n})$ est noeth\'{e}rien.
     \end{lemme}
   \begin{proof}
   L'\'{e}quivalence r\'{e}sulte du fait que  $E/\mathfrak{r}E$
   est isomorphe \`{a} $(B/\mathfrak{m})\otimes_A(C/\mathfrak{n})$ et que $\mathfrak{r}E$ est de type fini.
   \end{proof}

      \begin{corollaire}
    Soient $k$ un corps, $B$ et $C$ deux $k$-alg\`{e}bres
    pr\'{e}adiques et noeth\'{e}riennes d'id\'{e}aux de d\'{e}finition respectifs $\mathfrak{m}$ et $\mathfrak{n}$.
    Si l'anneau $(B/\mathfrak{m})\otimes_A(C/\mathfrak{n})$
    est noeth\'{e}rien, alors $B\widehat{\otimes}_A C$
    est plat sur $B$ et $C$.
      \end{corollaire}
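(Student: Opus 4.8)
The plan is to deduce the statement from Theorem~\ref{pro-com} applied with $A=k$, together with the fact that the hypotheses of that theorem are symmetric in $B$ and $C$. First I would dispose of the routine verifications: a field is absolutely flat; since $B$ and $C$ are noetherian, their ideals of definition $\mathfrak{m}$ and $\mathfrak{n}$ are finitely generated, both rings are coherent, and, by the noetherian example of the introduction, every ideal of $B$ or of $C$ satisfies (\textbf{APf}) --- in particular so do $\mathfrak{n}$ and $\mathfrak{m}$.

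The one hypothesis of Theorem~\ref{pro-com} that is not immediate is that $\mathfrak{r}E$ satisfies (\textbf{APf}), and I would obtain it by showing that $E=B\widehat{\otimes}_k C$ is itself noetherian --- after which every ideal of $E$ satisfies (\textbf{APf}). Recall that $E$ is the separated completion of $F=B\otimes_k C$ for the $\mathfrak{r}$-preadic topology; since $\mathfrak{m}$ and $\mathfrak{n}$ are finitely generated so is $\mathfrak{r}$, hence $E$ is separated and complete for the $\mathfrak{r}E$-adic topology, $\mathfrak{r}E$ is finitely generated, $(\mathfrak{r}E)^{n}=\mathfrak{r}^{n}E$, and
$$E/\mathfrak{r}E\;\cong\;F/\mathfrak{r}F\;\cong\;(B/\mathfrak{m})\otimes_k(C/\mathfrak{n}),$$
which is noetherian by hypothesis. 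This is exactly the situation handled by the lemma preceding this corollary (whose proof uses only that $\mathfrak{r}E$ is finitely generated and that $E/\mathfrak{r}E$ has this form, so that invoking it here is not circular), so $E$ is noetherian; alternatively one may invoke the classical fact that a ring separated and complete for an $I$-adic topology, with $I$ finitely generated and $A/I$ noetherian, is noetherian.

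All the hypotheses of Theorem~\ref{pro-com} being verified for $A=k$, I conclude that $E$ is $C$-flat. For flatness over $B$ I would apply the same theorem to the pair $(C,B)$ with ideals of definition $(\mathfrak{n},\mathfrak{m})$: the canonical isomorphism $B\otimes_k C\to C\otimes_k B$ is a topological isomorphism carrying $\mathfrak{r}$ onto $\text{Im}(\mathfrak{n}\otimes_k B)+\text{Im}(C\otimes_k\mathfrak{m})$, hence it identifies $E$ with $C\widehat{\otimes}_k B$ and $\mathfrak{r}E$ with the ideal attached to the pair $(C,B)$; the remaining hypotheses still hold ($B$ coherent, $\mathfrak{m}$ and $\mathfrak{r}E$ satisfy (\textbf{APf}), $k$ absolutely flat), so $E$ is also $B$-flat.

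I expect the only genuine obstacle to be the non-formal point above, namely the noetherianity of $E$; everything else is bookkeeping with Theorem~\ref{pro-com} and the symmetry of its hypotheses, and that point is already settled by the preceding lemma (or by the classical completion theorem). Some care is needed only in setting up the symmetry so that one and the same ring $E$, with the same ideal $\mathfrak{r}E$, serves both applications of the theorem.
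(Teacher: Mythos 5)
Your proposal is correct and follows exactly the route the paper intends (the corollary is stated without proof, but the lemma placed immediately before it exists precisely to supply the noetherianity of $E=B\widehat{\otimes}_k C$, whence the \textbf{(APf)} property of $\mathfrak{r}E$ and the applicability of Th\'eor\`eme~\ref{pro-com} with $A=k$, the flatness over $B$ following by symmetry). Your explicit remark that the lemma's proof only uses that $\mathfrak{r}E$ is of finite type and that $E/\mathfrak{r}E\cong(B/\mathfrak{m})\otimes_k(C/\mathfrak{n})$, so that no circularity arises, is a worthwhile precision that the paper leaves tacit.
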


On retrouve l'assertion ii) du corollaire (\textbf{0},7.7.12) de \cite{GD}.

   \begin{exemple} Soient $k$ un coprs,
   $B$ l'anneau $k[X_1,...,X_n]$ et
   $C$ un anneau de valuation de hauteur 1 $(a)$-adique ($a\neq 0$). Si on prend
   $\mathfrak{m}=(0)$ et $\mathfrak{n}=(a)$,
   l'anneau $E=B\widehat{\otimes}_k C$
   s'identifie \`{a} l'anneau  $C<<X_1,...,X_n>>$.
   D'apr\`{e}s [2, \S 1] les id\'{e}aux $\mathfrak{n}$
   et $\mathfrak{r}E$ v\'{e}rifient la propri\'{e}t\'{e}
   {\normalfont (\textbf{APf})}, et il est clair que,
   $E$ est noeth\'{e}rien si et seulement si $C$
   est noeth\'{e}rien.
       \end{exemple}

\begin{Remarque}
Soient $k$ un corps de caract\'eristique $0$, $B$ l'anneau $k[[X]]$ et $C$ l'anneau $k[[Y]]$. L'anneau $B\widehat{\otimes}_k C$ s'identifie \`{a} $k[[X,Y]]$ et d'apr\`es [9, th.6.2] il n'est pas plat sur $B \otimes_k C$.
\end{Remarque}

\end{document}